\def\l{\lambda}
\newtheorem{theorem}{Theorem}[section]
\newtheorem{lemma}[theorem]{Lemma}
\newtheorem{problem}[theorem]{Problem}
\newtheorem{definition}[theorem]{Definition}
\newtheorem{remark}[theorem]{Remark}
\newtheorem{claim}[theorem]{Claim}
\numberwithin{equation}{section}
\def\l{\lambda}
\def\l{\lambda}
\def\rmark{\mbox{$\rm\bf\rule{0.06em}{1.45ex}\kern-0.05em R$}}
\def\pmark{\mbox{$\rm\bf\rule{0.06em}{1.45ex}\kern-0.05em P$}}
\def\nmark{\mbox{$\rm\bf\rule{0.06em}{1.45ex}\kern-0.05em N$}}
\def\vdash{\mbox{$\rm\| \kern-0.13em -$}}
\newcommand{\lusim}[1]{\smash{\underset{\raisebox{1.2pt}[0cm][0cm]{$\sim$}}
{{#1}}}}
\def\l{\lambda}
\def\rmark{\mbox{$\rm\bf\rule{0.06em}{1.45ex}\kern-0.05em R$}}
\def\pmark{\mbox{$\rm\bf\rule{0.06em}{1.45ex}\kern-0.05em P$}}
\def\nmark{\mbox{$\rm\bf\rule{0.06em}{1.45ex}\kern-0.05em N$}}
\def\vdash{\mbox{$\rm\| \kern-0.13em -$}}
\begin{document}

\title[Shelah's strong covering property]{Shelah's strong covering property and $CH$ in $V[r]$ }

\author[E. Eslami and M. Golshani]{Esfandiar Eslami and Mohammad Golshani}

\thanks{The authors would like to thank Prof. Sy Friedman for some useful
comments and remarks concerning $\S4.$ In particular Theorem 4.2
is suggested by him.} \maketitle




\begin{abstract}
In this paper we review Shelah's strong covering property and its
applications. We also extend some of the results of Shelah and
Woodin on the failure of $CH$ by adding a real.
\end{abstract}

\maketitle

\section{Introduction}

In this paper we review Shelah's strong covering property and its
applications, in particular, to pairs $(W,V)$ of models of set
theory with $V=W[r],$ for some real $r$. We also consider the
consistency results of Shelah and Woodin  on the failure of $CH$
by adding a real and prove some related results. Some other
results are obtained too.

The structure of the paper is as follows: In $\S 2$ we present an
interesting result of Vanliere [6] on blowing up the continuum
with a real. In $ \S 3$ we give some applications of Shelah's
strong covering property. In $\S 4$ we consider the work of Shelah
and Woodin stated above and prove some new results. Finally in $\S
5$ we state some problems.

\section{On a theorem of Vanliere}

In this section we prove the following result of Vanliere [6]:

\begin{theorem} Assume $V=\mathbf{L}[X,a]$ where $X \subseteq \omega_n$
for some $n < \omega,$ and $a \subseteq \omega.$ If $\mathbf{L}[X] \models
 ZFC+GCH $ and the cardinals of $\mathbf{L}[X]$ are the
true cardinals, then $GCH$ holds in $V$.
\end{theorem}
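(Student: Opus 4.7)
The plan is to verify $2^\kappa = \kappa^+$ in $V$ for every infinite cardinal $\kappa$. First, the cardinals of $V$ coincide with those of $\mathbf{L}[X]$: no $\mathbf{L}[X]$-cardinal can be collapsed in $V$, since this would produce a real in $V$ coding a well-ordering of an $\mathbf{L}[X]$-uncountable ordinal, contradicting the hypothesis that $\mathbf{L}[X]$-cardinals are true. In particular $\omega_1^V = \omega_1^{\mathbf{L}[X]}$, and the computation of $\kappa^+$ is unambiguous.

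For $\kappa \geq \omega_n$ the standard condensation argument applies. Code the pair $(X,a)$ as a single $Y \subseteq \omega_n$, so that $V = \mathbf{L}[Y]$. Given $y \subseteq \kappa$ with $y \in \mathbf{L}_\alpha[Y]$, form a Skolem hull $M \prec \mathbf{L}_\alpha[Y]$ of cardinality $\kappa$ containing $\kappa \cup \{Y,y\}$; this is feasible because $|Y| \leq \omega_n \leq \kappa$. The transitive collapse fixes $Y$ and every ordinal $\leq \kappa$, hence takes the form $\mathbf{L}_{\bar\alpha}[Y]$ for some $\bar\alpha < \kappa^+$, and so $y \in \mathbf{L}_{\kappa^+}[Y]$. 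Counting yields $|\mathcal{P}(\kappa)|^V \leq \kappa^+$, hence $2^\kappa = \kappa^+$ in $V$.

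For $\kappa < \omega_n$ this bound degenerates to $\omega_n^+$, since any Skolem hull containing $Y$ must have cardinality at least $\omega_n$, so a refinement is required. I proceed by induction on $n$. The base case $n = 0$ is immediate: then $V = \mathbf{L}[X \oplus a] = \mathbf{L}[r]$ for a real $r$, and $\mathbf{L}[r] \models \GCH$ by condensation in the $J[r]$-hierarchy. For the inductive step I use $\mathbf{L}[X] \models \GCH$ to argue that each $y \subseteq \kappa$ in $V$ is definable inside $\mathbf{L}[X,a]$ from $a$ together with a parameter that may be taken from $\mathbf{L}_{\kappa^+}[X]$, a set of $V$-cardinality $\kappa^+$; counting formulas and parameters then gives $|\mathcal{P}(\kappa)|^V \leq \kappa^+$. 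An alternative route is to first establish, using the preservation of cardinals and $\GCH$ in $\mathbf{L}[X]$, that $V$ is a set-generic extension of $\mathbf{L}[X]$ by a forcing of size $\omega_1^{\mathbf{L}[X]}$, and then apply the nice-name bound together with $\GCH$ in $\mathbf{L}[X]$.

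The principal obstacle is the inductive step for $\kappa < \omega_n$: reducing the defining parameter of an arbitrary subset of $\kappa$ in $V$ down to an element of $\mathbf{L}_{\kappa^+}[X]$, despite the impossibility of fitting $X$ itself into a Skolem hull of size $\kappa$. Overcoming this requires a careful combination of condensation in the $\mathbf{L}[X,a]$-hierarchy with the $\GCH$ structure of $\mathbf{L}[X]$ afforded by the hypothesis; the hypothesis that $\mathbf{L}[X]$-cardinals are true cardinals enters precisely here, guaranteeing that the target set $\mathbf{L}_{\kappa^+}[X]$ has the expected cardinality $\kappa^+$ in $V$ as well as in $\mathbf{L}[X]$.
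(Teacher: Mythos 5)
Your Case $\kappa \geq \omega_n$ matches the paper's Case 1, and your final counting is the right idea, but for $\kappa < \omega_n$ --- which you yourself flag as the principal obstacle --- you do not actually give an argument, and neither of your two sketches closes the gap. The assertion that every $y \subseteq \kappa$ in $V$ is definable from $a$ together with a parameter from $\mathbf{L}_{\kappa^+}[X]$ is exactly the statement that needs proof: a hull of size $\kappa$ cannot contain $X$, and you provide no device for trading the large parameter $X$ for a small one, so ``a careful combination of condensation with the GCH structure of $\mathbf{L}[X]$'' is a restatement of the problem, not a solution. Your alternative route is unsound: there is no reason that $V=\mathbf{L}[X,a]$ is a set-generic extension of $\mathbf{L}[X]$, let alone by a forcing of size $\omega_1^{\mathbf{L}[X]}$; cardinal preservation plus GCH in $\mathbf{L}[X]$ does not imply genericity of $a$ (non-generic reals with precisely these features are the subject of this whole area), so the nice-name count is unavailable. (Also, your opening argument that collapsing an $\mathbf{L}[X]$-cardinal ``would produce a real'' is both unnecessary --- cardinal agreement is a hypothesis of the theorem --- and incorrect as stated, since collapsing a cardinal onto a larger uncountable cardinal adds no real coding a well-order of it.)

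What is missing is the paper's Lemma 2.2 (Vanliere): if $a \subseteq \mu \leq \kappa < \lambda \leq \nu$ with $\lambda$ regular, $V=\mathbf{L}[X,a]$, $Z \subseteq \lambda$, $Z \in \mathbf{L}[X]$, $Y \in \mathbf{L}[Z,a]$ and $\lambda_{\mathbf{L}[X]}^+=\lambda^+$, then some proper initial segment $Z'$ of $Z$ satisfies $Z' \in \mathbf{L}[X]$ and $Y \in \mathbf{L}[Z',a]$. Its proof is the real content of the theorem: collapse a hull of $\mathbf{L}_\theta[Z,a]$ to $\mathbf{L}_\delta[Z,a]$ with $\delta<\lambda^+$, then compare a continuous chain of elementary submodels of $\mathbf{L}_\delta[Z,a]$ built in $V$ with one of $\mathbf{L}_\delta[Z]$ built in $\mathbf{L}[Z]$; on a club of indices they agree, and collapsing a suitable member $M$ with $M\cap\lambda\in\lambda$ replaces the parameter $Z$ by the initial segment $Z'=(M\cap\lambda)\cap Z$, which is automatically in $\mathbf{L}[X]$. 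Iterating this, and recoding each bounded subset of $\omega_m$ inside $\mathbf{L}[X]$ as a subset of $\omega_{m-1}$, steps the parameter down from $X\subseteq\omega_n$ to some $Z\subseteq\kappa$ with $Z\in\mathbf{L}[X]$ and $Y\in\mathbf{L}[Z,a]$; then Case 1's condensation gives $Y\in\mathbf{L}_\alpha[Z,a]$ with $\alpha<\kappa^+$, and the count $\kappa^+\cdot(2^\kappa)^{\mathbf{L}[X]}\cdot\kappa=\kappa^+$ (using GCH in $\mathbf{L}[X]$ and the agreement of cardinals) finishes. Your proposal contains no analogue of this two-chain, initial-segment device, so the case $\kappa<\omega_n$ remains unproved.
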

\begin{proof} Let $\kappa$ be an infinite cardinal. We prove the
following:

 $(*)_{\kappa}:$ $\hspace{2.cm}$For any $Y \subseteq \kappa$ there
is an ordinal $\alpha < \kappa^+$ and

$\hspace{2.85cm}$ a set $Z \in \mathbf{L}[X], Z \subseteq \kappa$ such that $Y \in
\mathbf{L}_{\alpha}[Z,a].$

Then it will follow that $\mathcal{P}(\kappa) \subseteq \bigcup_{\alpha <
\kappa^+} \bigcup_{Z \in \mathcal{P}^{\mathbf{L}[X]}(\kappa)} \mathbf{L}_{\alpha}[Z,a],$ and
hence

\begin{center}

$2^{\kappa} \leq \sum_{\alpha < \kappa^+} \sum_{Z \in
\mathcal{P}^{\mathbf{L}[X]}(\kappa)}| \mathbf{L}_{\alpha}[Z,a]| \leq \kappa^+.
(2^{\kappa})^{\mathbf{L}[X]}. \kappa= \kappa^+$

\end{center}

which gives the result. Now we return to the proof of
$(*)_{\kappa}$.

{\bf Case 1}. $\kappa \geq \aleph_n$.

Let $Y \subseteq \kappa.$ Let $\theta$ be large enough regular
such that $Y \in \mathbf{L}_{\theta}[X,a].$ Let $N \prec \mathbf{L}_{\theta}[X,a]$
be such that $| N |= \kappa, N \cap \kappa^{+} \in
\kappa^{+}$ and $\kappa \cup \{Y, X, a\} \subseteq N$. By the
condensation lemma there are $\alpha < \kappa^+$ and $\pi$ such
that $\pi: N \cong \mathbf{L}_{\alpha}[X, a].$ then $Y=\pi(Y) \in
\mathbf{L}_{\alpha}[X, a].$ Thus $(*)_{\kappa}$ follows.

{\bf Case 2.} $\kappa < \aleph_n$.

We note that the above argument does not work in this case. Thus
another approach is needed. To continue the work, we state a
general result (again due to Vanliere) which is of interest in its
own sake.

\begin{lemma} Suppose $\mu \leq \kappa < \lambda \leq \nu$ are
infinite cardinals, $\lambda$ regular. Suppose that $a \subseteq
\mu, Y \subseteq \kappa, Z \subseteq \lambda,$ and $X \subseteq
\nu$ are such that $V=\mathbf{L}[X,a], Z \in \mathbf{L}[X], Y \in \mathbf{L}[Z,a]$ and
$\lambda_{\mathbf{L}[X]}^+= \lambda^+.$ Then there exists a proper initial
segment $Z^{'}$ of $Z$ such that $Z^{'} \in \mathbf{L}[X]$ and $Y \in
\mathbf{L}[Z^{'},a].$
\end{lemma}
\begin{proof} Let $\theta \geq \nu$ be regular such that $Y \in
\mathbf{L}_{\theta}[Z,a].$ Let $N \prec \mathbf{L}_{\theta}[Z,a]$ be such that $|
N |= \lambda, N \cap \lambda^{+} \in \lambda^{+}$ and $\lambda
\cup \{Y, Z, a\} \subseteq N$. By the condensation lemma we can
find $\delta < \lambda^+$ and $\pi$ such that $\pi: N \cong
\mathbf{L}_{\delta}[Z, a].$

In $V$, let $ \langle M_{i}: i < \lambda \rangle$ be a continuous
chain of elementary submodels of  $\mathbf{L}_{\delta}[Z, a]$  with union
$\mathbf{L}_{\delta}[Z, a]$ such that for each $i < \lambda, M_{i}
\supseteq \kappa$, $| M_{i}| < \lambda$ and $M_{i} \cap
\lambda \in \lambda.$

In $\mathbf{L}[Z]$ let  $ \langle W_{i}: i < \lambda \rangle$ be a
continuous chain of elementary submodels of  $\mathbf{L}_{\delta}[Z]$ with
union $\mathbf{L}_{\delta}[Z]$ such that for each $i < \lambda, W_{i}
\supseteq \kappa$,  $| W_{i}| < \lambda$ and $W_{i} \cap
\lambda \in \lambda$

Now we work in $V$. Let $E= \{i < \lambda: M_{i}\cap
\mathbf{L}_{\delta}[Z]=W_{i} \}$. Then $E$ is a club of $\lambda.$ Pick $i
\in E$ such that $Y \in M_{i},$ and let $M=M_{i},$ and  $W=W_{i}.$
By the condensation lemma let $\eta < \lambda$ and $\bar{\pi}$ be
such that $\bar{\pi}:M \cong \mathbf{L}_{\eta}[Z^{'}, a]$ where $Z^{'}=
\bar{\pi}[M \cap Z]=\bar{\pi}[(M \cap \lambda) \cap Z]=(M \cap
\lambda) \cap Z$, a proper initial segment of Z. Then
$Y=\bar{\pi}(Y) \in \mathbf{L}_{\eta}[Z^{'}, a]$ and $Z^{'} \subseteq \eta
< \lambda.$ It remains to observe that $Z^{'} \in \mathbf{L}[X]$ as $Z^{'}$
is an initial segment of Z. The lemma follows.
\end{proof}

We are now ready to complete the proof of Case 2. By Lemma 2.2 we
can find a bounded subset $X_n$ of $\omega_n$ such that $X_{n} \in
\mathbf{L}[X]$ and $Y \in \mathbf{L}[X_{n},a]$. Now trivially we can find a subset
$Z_{n-1}$ of $\omega_{n-1}$ such that $\mathbf{L}[X_{n}]=\mathbf{L}[Z_{n-1}],$ and
hence $Z_{n-1} \in \mathbf{L}[X]$ and $Y \in \mathbf{L}[Z_{n-1},a].$ Again by Lemma
2.2 we can find a bounded subset $X_{n-1}$ of $\omega_{n-1}$ such
that $X_{n-1} \in \mathbf{L}[X]$ and $Y \in \mathbf{L}[X_{n-1},a],$ and then we find
a subset $Z_{n-2}$ of $\omega_{n-2}$ such that
$\mathbf{L}[X_{n-1}]=\mathbf{L}[Z_{n-2}]$. In this way we can finally find a subset
$Z$ of $\kappa$ such that $Z \in \mathbf{L}[X]$ and $Y \in \mathbf{L}[Z,a]$. Then as
in case 1, for some $\alpha < \kappa^+, Y \in \mathbf{L}_{\alpha}[Z,a]$ and
$(*)_{\kappa}$ follows.
\end{proof}

\section{Shelah's strong covering property and its applications}

In this section we study Shelah's strong covering property and
give some of its applications.
 By a pair $(W,V)$ we always mean a pair $(W,V)$ of
models of $ZFC$ with the same ordinals such that $W \subseteq V.$
Let us give the main definition.
\begin{definition}
$(1)$ $(W,V)$ satisfies the strong $(\lambda,
\alpha)-$covering property, where $\lambda$ is a regular cardinal
of $V$ and $\alpha$ is an ordinal, if for every model $M \in V$
with universe $\alpha$ (in a countable language) and $a \subseteq
\alpha, | a | < \lambda$ (in $V$), there is $b \in W$ such
that $a \subseteq b \subseteq \alpha, b \prec M,$ and $ | b
| < \lambda$ (in $V$). $(W,V)$ satisfies the strong
$\lambda-$covering property if it satisfies the strong $(\lambda,
\alpha)-$covering property for every $\alpha.$

$(2)$ $(W,V)$ satisfies the strong $(\lambda^*, \lambda, \kappa,
\mu)-$covering property, where $\lambda^{*} \geq \lambda \geq
\kappa$ are regular cardinals of $V$ and $\mu$ is an ordinal, if
player one has a winning strategy in the following game, called
the $(\lambda^*, \lambda, \kappa, \mu)-$covering game, of length
$\lambda$:

In the $i-$th move player I chooses $a_{i} \in V$ such that $a_{i}
\subseteq \mu, | a_{i} | < \lambda^{*}$ (in $V$) and
$\bigcup_{j < i}b_{j} \subseteq a_{i}$, and player II chooses
$b_{i} \in V$ such that $b_{i} \subseteq \mu, | b_{i} | <
\lambda^{*}$ (in $V$) and $\bigcup_{j \leq i}a_{j} \subseteq
b_{i}$.

Player I wins if there is a club $C \subseteq \lambda$ such that
for every $\delta \in C \cup \{ \lambda \}, cf(\delta)= \kappa
\Rightarrow \bigcup_{i < \delta}a_{i} \in W.$ $(W,V)$ satisfies
the strong $(\lambda^*, \lambda, \kappa, \infty)-$covering
property, if it satisfies the strong $(\lambda^*, \lambda, \kappa,
\mu)-$covering property for every $\mu.$
\end{definition}

The following theorem shows the importance of the first part of
this definition and plays an important role in this section.

\begin{theorem} Suppose $V=W[r], r$ a real and $(W,V)$
satisfies the strong $(\lambda, \alpha)-$covering property for
$\alpha < ([(2^{< \lambda})^{W}]^{+})^{V}$. Then $(2^{<
\lambda})^{V}=| (2^{< \lambda})^{W}| ^{V}.$
\end{theorem}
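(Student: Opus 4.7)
Set $\mu = (2^{<\lambda})^W$ and $\nu = ([(2^{<\lambda})^W]^+)^V$, so the hypothesis gives strong $(\lambda,\alpha)$-covering for every $\alpha < \nu$. The inequality $|\mu|^V \leq (2^{<\lambda})^V$ is immediate. For the reverse, I write $(2^{<\lambda})^V$ as a supremum and use $\lambda \leq \mu$, reducing to $(2^\kappa)^V \leq |\mu|^V$ for each infinite cardinal $\kappa < \lambda$ of $V$. Fix such a $\kappa$.

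Because $V = W[r]$ with $r$ a real, every $A \in \mathcal{P}(\kappa)^V$ is definable over $V$ from $r$ and some parameter in $W$. So for each $A$ I choose a formula $\varphi_A$ and a parameter $p_A \in W$ such that
\[
A = \{\xi < \kappa : V \models \varphi_A(\xi, r, p_A)\}.
\]
The formula is coded by a natural number, so only countably many choices arise on that side; the crux is to confine the relevant $p_A$ (or a code for it) inside a set of $V$-cardinality $|\mu|^V$. Strong covering will supply such a confinement, and the bound $\alpha < \nu$ is exactly what is needed to apply it to a structure whose universe has size $\mu$.

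Concretely, fix in $W$ a bijection $g: \mu \to H(\lambda)^W$ that is the identity on $\lambda$, which exists since $|H(\lambda)^W|^W = 2^{<\lambda} = \mu$. For each $A$, build a structure $M_A \in V$ with universe $\mu$ whose predicates encode (i) the membership relation of $H(\lambda)^W$ pulled back along $g$, (ii) the real $r$ as a subset of $\omega \subseteq \mu$, (iii) the set $A$ as a subset of $\kappa \subseteq \mu$, and (iv) a distinguished constant naming $g^{-1}(p_A)$. Since $\mu < \nu$, apply strong $(\lambda,\mu)$-covering with $a = \kappa \cup \{g^{-1}(p_A)\}$ to obtain $b_A \in W$ with $a \subseteq b_A \subseteq \mu$, $b_A \prec M_A$, and $|b_A|^V < \lambda$. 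Then $g[b_A] \in W$ is an elementary submodel of $(H(\lambda)^W,\in)$ containing $p_A$; let $\bar b_A \in W$ be its transitive collapse and $\bar p_A$ the image of $p_A$. The triple $(\bar b_A, \bar p_A, \varphi_A)$ together with $r$ determines $A$ inside $V$ by inverting the collapse and evaluating $\varphi_A$.

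Counting: each $\bar b_A$ is a transitive set of $W$-cardinality less than $\lambda$, hence an element of $H(\lambda)^W$; there are thus at most $\mu$ possible $W$-codes, yielding $|\mu|^V$ codes in $V$, and the additional $\aleph_0$ formulas change nothing. The main technical obstacle is verifying that the $V$-size bound $|b_A|^V < \lambda$ delivers the needed $W$-size bound $|\bar b_A|^W < \lambda$; this should follow because $\lambda$ remains a cardinal of $W$ (being regular in $V$) and the transitive collapse of $g[b_A]$ is pinned down in $W$ by its $\in$-structure. A subsidiary delicate point is the reconstruction of $A$ from $(\bar b_A, \bar p_A, \varphi_A, r)$: it rests on having inserted $g^{-1}(p_A)$ into the covered set $a$, which ensures $p_A \in g[b_A]$ and hence that $\bar p_A \in \bar b_A$ carries exactly the information needed to decode $A$ from $r$ via $\varphi_A$.
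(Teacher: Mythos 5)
The paper gives no proof of this theorem---it simply cites Shelah [3, Theorem VII.4.5]---so your argument has to stand on its own, and it has a genuine gap at exactly the point you yourself call the crux. You say the task is to confine the parameter $p_A$ to a set of $V$-cardinality $|\mu|^V$, but the proof never does this: when you form $a=\kappa\cup\{g^{-1}(p_A)\}$ you are tacitly assuming that the defining parameter $p_A\in W$ already lies in $H(\lambda)^W$, the range of $g$. Nothing in $V=W[r]$ gives that; $p_A$ could perfectly well be, say, a subset of $(2^{\lambda})^W$ with no obvious small substitute. Worse, if one \emph{could} always choose $p_A\in H(\lambda)^W$, then counting pairs (formula, parameter) would immediately give $(2^{\kappa})^V\le|\mu|^V$ with no covering property at all, so this silent assumption essentially begs the question. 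Nor can the covering step as you set it up supply the missing reduction: the structure $M_A$ (universe $\mu$, predicates for $\in^{H(\lambda)^W}$ via $g$, for $r$ and for $A$, plus a constant) carries no information about satisfaction of $\varphi_A$ in $V$, so elementarity of $b_A$ in $M_A$ cannot trade an arbitrary $W$-parameter for a $<\lambda$-sized one. Your opening claim, that every $A\in\mathcal{P}(\kappa)^V$ is definable over $V$ from $r$ and one parameter of $W$, also needs justification from the intended meaning of $V=W[r]$ (e.g.\ that every set of ordinals of $V$ lies in $L[x,r]$ for some $x\in W$), which the paper never spells out; but even granting it, the parameter has no reason to be in $H(\lambda)^W$.

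The decoding and counting step is also unjustified. The code you count is $(\bar b_A,\bar p_A,\varphi_A)$, where $\bar b_A$ is the transitive collapse of $g[b_A]\prec(H(\lambda)^W,\in)$; the predicate for $A$ is not part of that structure, and recovering $A$ ``by inverting the collapse and evaluating $\varphi_A$'' would require knowing $g[b_A]$ itself (not merely its collapse) and, more seriously, evaluating $\varphi_A(\xi,r,p_A)$ in $V$, which no elementarity you have established reflects to anything computable from $(\bar b_A,\bar p_A,r)$. If instead you keep the $A$-predicate when collapsing, then---since $\kappa\subseteq b_A$ forces the collapse to be the identity on $\kappa$---the collapsed structure literally contains $A$, it need not lie in $W$, and the count becomes circular. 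So the required injection of $\mathcal{P}(\kappa)^V$ into a set of $V$-size $|\mu|^V$ is never established. By contrast, the point you flag as the main technical obstacle, passing from $|b_A|^V<\lambda$ to $|\bar b_A|^W<\lambda$, is the easy one (an injection of $\lambda$ into $b_A$ in $W$ would survive into $V$ and contradict $|b_A|^V<\lambda$). The real work in Shelah's VII.4.5---which is where the paper points instead of proving the theorem---is precisely the step your argument skips: showing that each $A$ can be captured from $r$ together with an object of $W$ of size $<\lambda$.
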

\begin{proof} Cf. [3, Theorem VII.4.5.].
\end{proof}

It follows from Theorem 3.2 that if $V=W[r], r$ a real and $(W,V)$
satisfies the strong $(\lambda^{+},
([(2^{\lambda})^{W}]^{+})^{V})-$covering property, then $(2^{
\lambda})^{V}=|(2^{\lambda})^{W}| ^{V}.$

We are now ready to give the applications of the strong covering
property. For a pair $(W,V)$ of models of $ZFC$ consider the
following conditions:

$\hspace{1cm} (1)_{\kappa}:\bullet$ $V=W[r], r$ a real,

$\hspace{2cm}\bullet$ $V$ and $W$ have the same cardinals $\leq
\kappa^{+},$

$\hspace{2cm}\bullet$ $W \models  \forall \lambda \leq
\kappa, 2^{\lambda}= \lambda^{+} $

$\hspace{2cm}\bullet$ $V \models  2^{\kappa} > \kappa^{+}
$.

$\hspace{1cm} (2)_{\kappa}:$ $W \models  GCH $.

$\hspace{1cm} (3)_{\kappa}:$ $V$ and $W$ have the same cardinals.

\begin{theorem} $(1)$ Suppose there is a pair $(W,V)$
satisfying $(1)_{\aleph_0}$ and $(2)_{\aleph_0}$. Then
$\aleph_2^{V}$ in inaccessible in $\mathbf{L}$.

$(2)$ Suppose there is a pair $(W,V)$ as in $(1)$ with $V \models
 2^{\aleph_0}> \aleph_2$. Then $0^{\sharp} \in
V$.

$(3)$ Suppose there is a pair $(W,V)$ as in $(1)$ with $CARD^{W}
\cap (\aleph_1^{V}, \aleph_2^{V})= \emptyset$. Then $0^{\sharp}
\in V$.

$(4)$ Suppose $\kappa > \aleph_0$ and there is a pair $(W,V)$
satisfying $(1)_{\kappa}.$ Then $0^{\sharp} \in V.$
\end{theorem}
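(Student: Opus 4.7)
The common strategy is, in each part, to assume the conclusion fails and to derive the strong $(\kappa^{+},(\kappa^{++})^{V})$-covering property for $(W,V)$, which via the consequence of Theorem 3.2 recorded just after its statement forces $(2^{\kappa})^{V}=\kappa^{+}$ and contradicts the last clause of $(1)_{\kappa}$. The set-up is uniform: under $(1)_{\kappa}$ we have $(2^{\kappa})^{W}=\kappa^{+}=(\kappa^{+})^{V}$, so $([(2^{\kappa})^{W}]^{+})^{V}=(\kappa^{++})^{V}$, and Theorem 3.2 applies at level $\lambda=\kappa$.

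For Part (4), where $\kappa$ is uncountable, assume $0^{\sharp}\notin V$. Jensen's covering lemma gives set covering between $\mathbf{L}$ and $V$: any set of ordinals in $V$ of $V$-size $<\kappa^{+}$ lies inside an $\mathbf{L}$-set of $V$-size $<\kappa^{+}$. The hypothesis $V=W[r]$, combined with $W\models 2^{\kappa}=\kappa^{+}$ and $(\kappa^{+})^{W}=(\kappa^{+})^{V}$, upgrades this to strong covering for $(W,V)$ at level $\kappa^{+}$: given $M\in V$ on universe $(\kappa^{++})^{V}$ and $a\subseteq(\kappa^{++})^{V}$ of size $<\kappa^{+}$, take a Skolem hull $N\prec M$ in $V$ containing $a$ of size $\kappa$, cover $N\cap\Ord$ by an $\mathbf{L}$-set $Y$ of $V$-size $\kappa$, and use the $\mathbf{L}$-definability of $Y$ together with absoluteness of Skolem functions to extract $b\in W$ with $a\subseteq b\prec M$ and $|b|^{V}=\kappa$, as in [3, Ch.\ VII]. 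This contradicts the first paragraph, so $0^{\sharp}\in V$.

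Parts (2) and (3) need the strong covering at the countable level $\aleph_{1}$, where Jensen's lemma is silent, so the extra hypotheses are used to shift the application one level higher. For Part (2), the assumption $V\models 2^{\aleph_{0}}>\aleph_{2}$ together with $(2)_{\aleph_{0}}$ (giving $(2^{\aleph_{1}})^{W}=\aleph_{2}^{W}$) shows that strong $(\aleph_{2}^{V},\aleph_{3}^{V})$-covering for $(W,V)$ would force $(2^{\aleph_{1}})^{V}\leq\aleph_{2}^{V}$, hence $(2^{\aleph_{0}})^{V}\leq\aleph_{2}^{V}$, contradicting the extra hypothesis. Since the needed covering is now at the uncountable level $\aleph_{2}^{V}$, assuming $0^{\sharp}\notin V$ one reconstructs it from Jensen exactly as in Part (4). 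For Part (3), the extra assumption that no $W$-cardinal lies strictly between $\aleph_{1}^{V}$ and $\aleph_{2}^{V}$ forces $(\aleph_{2}^{W})^{V}\geq\aleph_{2}^{V}$, and one similarly lifts Theorem 3.2 to level $\aleph_{2}$ before invoking Jensen.

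For Part (1) we do not assume $0^{\sharp}\notin V$, and the conclusion is only that $\aleph_{2}^{V}$ is inaccessible in $\mathbf{L}$. Suppose not; since $\aleph_{2}^{V}$ is regular in $V$, hence in $\mathbf{L}$, it must be an $\mathbf{L}$-successor $\aleph_{2}^{V}=(\tau^{+})^{\mathbf{L}}$ with $\aleph_{1}^{V}\leq\tau<\aleph_{2}^{V}$. Condensation inside $\mathbf{L}$ then produces, for every $a\subseteq\aleph_{2}^{V}$ of $V$-size $\aleph_{0}$, an $\mathbf{L}$-set of $V$-size $\aleph_{1}$ covering $a$, obtained as the image under the $\mathbf{L}_{\tau^{+}}$-definable surjection from $\tau$ onto $\aleph_{2}^{V}$ of a suitable small hull; combining with $V=W[r]$ and $W\models \GCH$ yields strong $(\aleph_{1},\aleph_{2}^{V})$-covering for $(W,V)$, forcing $(2^{\aleph_{0}})^{V}=\aleph_{1}^{V}$ and contradicting $(1)_{\aleph_{0}}$. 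The main obstacle throughout, and the delicate step in every part, is this promotion of set covering into the \emph{strong} covering of Definition 3.1(1): the covering $b$ must simultaneously be an elementary submodel of the given $V$-structure $M$ and lie in $W$, and pushing a Skolem hull built in $V$ down into $W$ is where the hypothesis $V=W[r]$ is used essentially.
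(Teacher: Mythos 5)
Parts (2) and (4) of your proposal do follow the paper's route: assume the conclusion fails, invoke a strong covering property for $(W,V)$ at an uncountable level, and contradict the last clause of $(1)_{\kappa}$ via Theorem 3.2. But note that the substantive input there --- that $0^{\sharp}\notin V$ gives the strong $\lambda^{*}$-covering property for every regular $\lambda^{*}\geq\aleph_{2}^{V}$ --- is not obtainable by the "Jensen covering plus Skolem hull" argument you sketch: the covering set $b$ must be closed under the Skolem functions of an arbitrary structure $M\in V$ while lying in $W$, and an $\mathbf{L}$-set covering $N\cap\mathrm{Ord}$ has no reason to be so closed; arranging that the limit of an alternating hull-and-cover chain lands in $W$ is precisely Shelah's theorem ([3, Theorem VII.2.6], together with the extra induction of the paper's Remark 3.4 to reach level $\aleph_{2}^{V}$), which the paper simply cites as condition $(\alpha)$. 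Since you also point to [3, Ch.\ VII], I would treat this as a presentational weakness rather than a fatal error.

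The genuine gaps are in parts (3) and (1). In (3), your argument does not reach a contradiction: lifting Theorem 3.2 to level $\aleph_{2}$, even with $\aleph_{2}^{W}=\aleph_{2}^{V}$, only gives $(2^{\aleph_{0}})^{V}\leq(2^{\aleph_{1}})^{V}=\aleph_{2}^{V}$, which is perfectly compatible with the clause $2^{\aleph_{0}}>\aleph_{1}$ of $(1)_{\aleph_{0}}$ (there is no hypothesis $2^{\aleph_{0}}>\aleph_{2}$ in this part). The hypothesis $CARD^{W}\cap(\aleph_{1}^{V},\aleph_{2}^{V})=\emptyset$ is not there to push the cardinal arithmetic up one level; its role is to supply strong $\aleph_{1}^{V}$-covering itself (the paper's condition $(\beta)$, i.e.\ [3, Theorem VII.2.8]), after which Theorem 3.2 at $\lambda=\aleph_{1}$ gives $(2^{\aleph_{0}})^{V}=\aleph_{1}^{V}$, the desired contradiction. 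In (1), your key step --- that $\aleph_{2}^{V}=(\tau^{+})^{\mathbf{L}}$ together with $V=W[r]$ and GCH in $W$ yields strong $(\aleph_{1},\aleph_{2}^{V})$-covering --- is merely asserted, and the condensation remark preceding it carries no weight: since $\aleph_{2}^{V}$ has uncountable cofinality, any countable $a\subseteq\aleph_{2}^{V}$ is covered by an ordinal, hence by a constructible set of $V$-size $\aleph_{1}$, whether or not $\aleph_{2}^{V}$ is inaccessible in $\mathbf{L}$; so nothing in your sketch actually uses the assumed non-inaccessibility, and the implication you need is exactly the hard content. The paper argues differently: it first reduces to the case $0^{\sharp}\notin V$ (otherwise the conclusion is automatic), gets strong $\aleph_{2}^{V}$-covering from $(\alpha)$, obtains squares in $W$ from Jensen's covering lemma and [3, Claim VII.1.11], and then applies [3, Theorem VII.4.10] to conclude that $\aleph_{2}^{V}$ is inaccessible in $W$, hence in $\mathbf{L}$. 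Without an argument of that kind (or a proof of your asserted covering transfer), parts (1) and (3) remain unproved.
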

Before we give the proof of Theorem 3.3 we state some conditions
which imply Shelah's strong covering property. Suppose that in
$V,0^{\sharp}$ does not exist. Then:

$\hspace{1cm} (\alpha)$ If $\lambda^{*} \geq \aleph_2^{V}$ is
regular in $V$, then $(W,V)$ satisfies the strong
$\lambda^{*}-$covering

$\hspace{1.6cm}$property.

$\hspace{1cm} (\beta)$ If $CARD^{W} \cap (\aleph_1^{V},
\aleph_2^{V})= \emptyset$ then $(W,V)$ satisfies the strong
$\aleph_1^{V}-$covering

$\hspace{1.6cm}$property.

\begin{remark} For $\lambda^{*} \geq \aleph_3^{V}, (\alpha)$
follows from [3, Theorem VII.2.6], and $(\beta)$ follows from [3,
Theorem VII.2.8]. In order to obtain $(\alpha)$ for $\lambda^{*}=
\aleph_2^{V}$ we can proceed as follows: As in the proof of
[3, Theorem VII.2.6] proceed by induction on $\mu$ to show that
$(\mathbf{L},V)$ satisfies the strong $(\aleph_2^{V},\aleph_1^{V},
\aleph_0^{V}, \mu)-$covering property. For successor $\mu$ (in
$\mathbf{L}$) use [3, Lemma VII.2.2] and for limit $\mu$ use [3, Remark VII.2.4] (instead of [3, Lemma VII.2.3]). It then follows that $(\mathbf{L},V)$
and hence $(W,V)$ satisfies the strong $\aleph_2^{V}-$covering
property.
\end{remark}

\begin{proof} $(1)$ We may suppose that $0^{\sharp}
\notin V.$ Then by $(\alpha), (W,V)$ satisfies the strong
$\aleph_2^{V}-$covering property. On the other hand by Jensen's
covering lemma and [3, Claim VII.1.11], $W$ has squares. By [3, Theorem
VII.4.10], $\aleph_2^{V}$ is inaccessible in $W$, and hence in
$\mathbf{L}$.

$(2)$ Suppose not. Then by $(\alpha), (W,V)$ satisfies the strong
$\aleph_2^{V}-$covering property. By Theorem 3.2,
$(2^{\aleph_0})^{V} \leq (2^{\aleph_1})^{V}=
|(2^{\aleph_1})^{W}| ^{V}=|
\aleph_2^{W}|=\aleph_2^{V},$ which is a contradiction.

$(3)$ Suppose not. Then by $(\beta), (W,V)$ satisfies the strong
$\aleph_1^{V}-$covering property, hence by Theorem 3.2,
$(2^{\aleph_0})^{V}= |(2^{\aleph_0})^{W}|
^{V}=\aleph_1^{V},$ which is a contradiction.

$(4)$ Suppose not. Then by $(\alpha), (W,V)$ satisfies the strong
$\kappa^{+}-$covering property. By Theorem 3.2, $(2^{\kappa})^{V}=
|(2^{\kappa})^{W}| ^{V}=\kappa^{+},$ and we get a
contradiction.
\end{proof}
\begin{theorem} $(1)$ Suppose there is a pair $(W,V)$
satisfying $(1)_{\kappa},(2)_{\kappa}$ and $(3)_{\kappa}.$ Then
there is in $V$ an inner model with a measurable cardinal.

$(2)$ Suppose there is a pair $(W,V)$ satisfying $(1)_{\kappa},$
where $\kappa \geq \aleph_{\omega}.$ Further suppose that
$\kappa_{W}^{++}=\kappa_{V}^{++}$ and $(W,V)$ satisfies the
$\kappa^{+}-$covering property. Then there is in $V$ an inner
model with a measurable cardinal.
\end{theorem}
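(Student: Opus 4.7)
In each part the plan is to argue by contradiction, assuming $V$ contains no inner model with a measurable cardinal. Under this hypothesis, the Dodd--Jensen core model $K$ exists, $(K,V)$ satisfies the Jensen covering property, and $K$ carries $\square_\mu$ for every $\mu$; these are precisely the features of $L$ that were used to establish $(\alpha)$ and $(\beta)$ in Remark 3.4. The common target is to establish the strong $(\kappa^+, \kappa^{++V})$-covering property for $(W,V)$. Once it is in hand, Theorem 3.2 (in the $\kappa^+$-successor form recorded immediately after its statement) gives
\begin{equation*}
(2^\kappa)^V \;=\; \bigl|(2^\kappa)^W\bigr|^V,
\end{equation*}
and since $(1)_\kappa$ supplies $(2^\kappa)^W = \kappa^{+W} = \kappa^{+V}$, we conclude $(2^\kappa)^V = \kappa^+$, contradicting the last clause of $(1)_\kappa$.

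For part (1), conditions $(2)_\kappa$ and $(3)_\kappa$ give $\text{CARD}^W = \text{CARD}^V$ and $W \models GCH$. Running the arguments of Remark 3.4 with $K$ in place of $L$ and with ``no inner model with a measurable cardinal'' in place of ``$0^\sharp \notin V$'', we should obtain that $(K,V)$ satisfies strong $(\kappa^+, \kappa^{++V})$-covering (invoking the analog of $(\beta)$ when $\kappa = \aleph_0$ and of $(\alpha)$ otherwise; note that $(3)_\kappa$ makes the $\text{CARD}^W$-hypothesis of $(\beta)$ hold vacuously, since $V$ has no cardinals strictly between $\aleph_1^V$ and $\aleph_2^V$). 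Since $K \subseteq W$, every $K$-cover is a $W$-cover, so strong covering transfers upward to $(W,V)$, finishing this case.

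For part (2), we lose $(2)_\kappa$ and $(3)_\kappa$, but are given the plain $\kappa^+$-covering property for $(W,V)$ together with $\kappa^{++W} = \kappa^{++V}$ and $\kappa \geq \aleph_\omega$. The plan is to play the covering game of Definition 3.1(2) inside $\kappa^{++V}$: at move $i$, use plain $\kappa^+$-covering to choose $b_i \in W$ of size $\leq \kappa$ covering $a_i \cup \bigcup_{j<i} b_j$, and then use the no-measurable hypothesis -- via $K \subseteq W$, $\square$ in $K$, and the scales available at singulars $\geq \aleph_\omega$ -- to force $\bigcup_{i<\delta} a_i \in W$ at cofinality-$\aleph_0$ points $\delta$. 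The preservation $\kappa^{++W} = \kappa^{++V}$ is what keeps the game from ``overflowing'' the intended range $\kappa^{++V}$. The main obstacle lies here: upgrading from plain to strong $\kappa^+$-covering in the absence of $(2)_\kappa$ and $(3)_\kappa$ requires the fine-structural and PCF-style analysis sketched above, whereas in part (1) strong covering comes essentially for free from the core-model side.
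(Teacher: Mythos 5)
Your overall strategy---assume $V$ has no inner model with a measurable cardinal, extract a strong covering property for $(W,V)$ from core-model theory, and then contradict the last clause of $(1)_\kappa$ via Theorem 3.2 in its successor form---is exactly the paper's strategy. But the steps carrying the real weight are not established. First, your transfer of covering from $(K,V)$ to $(W,V)$ rests on the bare assertion $K \subseteq W$. The core model you need is $K^V$ (it is the covering lemma \emph{in $V$} that matters), and $K^V \subseteq W$ is not automatic when $V = W[r]$: a new real can in principle generate new mice, so agreement of the core models of $W$ and $V$ (up to the relevant level) has to be proved from the hypotheses. This is precisely what the paper imports from [3, Theorem VII.4.2] (that $K_\lambda(W)=K_\lambda(V)$), and it is also where assumptions such as $(2)_\kappa$ and the cardinal agreements enter; note that your sketch never uses $W \models GCH$ at all, a sign that ``replace $\mathbf{L}$ by $K$ in Remark 3.4'' is too quick. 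The paper's proof of part (1) is a direct appeal to [3, Conclusion VII.4.3(2)], which packages exactly these points.

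Second, for part (2) you explicitly leave the central step---upgrading plain $\kappa^+$-covering to strong $(\kappa^+,\kappa^{++V})$-covering in the absence of $(2)_\kappa$ and $(3)_\kappa$---as an acknowledged obstacle, so the proposal does not actually prove (2). The paper closes this gap concretely: write $\kappa=\mu^{+n}$ with $\mu$ a limit cardinal, get the strong $(\kappa^+,\kappa,\aleph_1,\mu)$-covering property from [3, Theorem VII.2.6, Theorem VII.4.2(2), Conclusion VII.4.3(3)], observe that the $\kappa^+$-covering property together with agreement of cardinals $\leq\kappa^+$ yields the plain $\mu^{+i}$-covering property for every $i\leq n+1$, and then climb the finitely many steps from $\mu$ to $\kappa^{++}$ by repeated use of the successor-step Lemma [3, VII.2.2]; this is where the hypotheses $\kappa\geq\aleph_\omega$ and $\kappa_W^{++}=\kappa_V^{++}$ do their work. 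Your account of the covering game also misassigns the roles: player II's sets $b_i$ are arbitrary elements of $V$, and it is player I who must secure $\bigcup_{i<\delta}a_i \in W$, at points $\delta$ of cofinality $\aleph_1$ (the third parameter of the game as used here), not at cofinality-$\aleph_0$ points.
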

\begin{proof} $(1)$ Suppose not. Then by [3, conclusion VII.4.3(2)],
$(W,V)$ satisfies the strong $\kappa^{+}-$covering property, hence
by Theorem 3.2, $(2^{\kappa})^{V}=|(2^{\kappa})^{W} | ^{V}=
\kappa^{+},$ which is a contradiction.

$(2)$ Suppose not. Let $\kappa=\mu^{+n},$ where $\mu$ is a limit
cardinal, and $n < \omega.$ By [3, Theorem VII.2.6, Theorem VII.4.2(2) and
Conclusion VII.4.3(3)], we can show that $(W,V)$ satisfies the
strong $(\kappa^{+}, \kappa, \aleph_{1}, \mu)-$covering property.
On the other hand since $(W,V)$ satisfies the
$\kappa^{+}-$covering property and $V$ and $W$ have the same
cardinals $\leq \kappa^{+}, (W,V)$ satisfies the
$\mu^{+i}-$covering property for each $i \leq n+1.$ By repeatedly
use of [3, Lemma VII.2.2], $(W,V)$ satisfies the strong
$(\kappa^{+}, \kappa, \aleph_{1}, \kappa^{++})-$covering property,
and hence the strong  $(\kappa^{+}, \kappa^{++})-$covering
property. By Theorem 3.2, $(2^{\kappa})^{V}=|(2^{\kappa})^{W}
| ^{V}= \kappa^{+},$ which is a contradiction.
\end{proof}
\begin{remark}
In [3] (see also [4]), Theorem 3.4(1), for $\kappa=
\aleph_0,$ is stated under the additional assumption $2^{\aleph_0}
> \aleph_{\omega}$ in $V$.
\end{remark}
Let us close this section by noting that the hypotheses in
[3, Conclusion VII.4.6] are inconsistent. In other words we are
going to show that the following hypotheses are not consistent:

$\hspace{1cm}(a)$ $V$ has no inner model with a measurable
cardinal.

$\hspace{1cm}(b)$ $V=W[r], r$ a real, $V$ and $W$ have the same
cardinals $\leq \lambda,$ where $(2^{\aleph_0})^{V} \geq$

$\hspace{1.5cm}$ $\lambda \geq\aleph_{\omega}^{V}, \lambda$ is a
limit cardinal.

$\hspace{1cm}(c)$ $W \models  2^{\aleph_0} < \lambda
$.

To see this, note that by $(b)$ and [3, Theorem VII.4.2],
$K_{\lambda}(W)=K_{\lambda}(V)$. Then by [3, conclusion VII.4.3(3)],
$(W,V)$ satisfies the strong $(\aleph_1, \lambda)-$covering
property. On the other hand $\lambda >
([(2^{\aleph_0})^{W}]^{+})^{V}$, and hence by Theorem 3.2,
$\lambda \leq (2^{\aleph_0})^{V}= | (2^{\aleph_0})^{W} |
^{V} < \lambda.$ Contradiction.

\section{Some consistency results}

In this section we consider the work of Shelah and Woodin in [4]
and prove some related results.

\begin{theorem} There is a generic extension $W$ of $L$ and two
reals $a$ and $b$ such that:

$\hspace{1cm}(a)$ Both of $W[a]$ and $W[b]$ satisfy $CH$.

$\hspace{1cm}(b)$ $CH$ fails in $W[a,b]$.

Furthermore $2^{\aleph_0}$ can be arbitrary large in $W[a,b]$.
\end{theorem}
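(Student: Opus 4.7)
The plan is to follow the Shelah--Woodin strategy from [4] but use the flexibility afforded by two reals (rather than one) to avoid any large cardinal assumptions. The idea is to construct $W$ as a generic extension of $L$ still satisfying $CH$ but prepared to carry many potential reals, and then to exhibit $a, b$ in a further forcing extension so that each of $W[a], W[b]$ is forced by a nice c.c.c.\ subforcing preserving $CH$, while the joint extension $W[a,b]$ is forced by a non-c.c.c.\ product that realises $\kappa$ many distinct reals.

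In more detail, I would first pass from $L$ to $W$ by forcing with $\operatorname{Add}(\omega_1, \kappa)$, where $\kappa$ is the target value for $2^{\aleph_0}$ in $W[a, b]$. Since this forcing is $\omega_1$-closed, it adds no new reals and $W \models CH$; however, it makes $2^{\aleph_1} \ge \kappa$ in $W$, creating the room needed for many nice names for reals in the subsequent forcing. Inside $W$, I would construct a c.c.c.\ forcing $\mathbb{P}$ of size $\aleph_1$ (adapted from a Suslin tree built with a $\diamond$-style coding from the preparation step) with two key features: (a) $\mathbb{P}$ individually is c.c.c., and (b) the square $\mathbb{P} \times \mathbb{P}$ is not c.c.c.\ and realises, through a $W$-definable decoding, $\kappa$ many pairwise distinct reals in the joint extension. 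Taking $a, b$ to be mutually generic $\mathbb{P}$-generic reals, one obtains: by (a) and the standard nice-name count (at most $\aleph_1^{\aleph_0} = \aleph_1$ names in $W \models CH$), both $W[a]$ and $W[b]$ satisfy $CH$; by (b) and the large $2^{\aleph_1} = \kappa$ in $W$, one finds $\kappa$ distinct reals in $W[a, b]$, so $(2^{\aleph_0})^{W[a, b]} \geq \kappa$.

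The principal obstacle is the construction of $\mathbb{P}$ with exactly this asymmetric behaviour: c.c.c.\ when forced alone so that $CH$ is preserved in $W[a]$ and $W[b]$, but producing $\kappa$ distinct reals in $W[a, b]$ when both generic reals are added. This requires combining a Suslin-type construction (for the c.c.c./non-c.c.c.\ product contrast) with a $\diamond$-style coding baked into the $\omega_1$-closed preparation, so that the joint generic in $W[a, b]$ decodes, via a function in $W$, a $\kappa$-sequence of pairwise distinct reals. Since $\kappa$ is chosen arbitrarily at the preparation stage, the theorem's ``arbitrary large $2^{\aleph_0}$'' conclusion follows automatically from the same argument.
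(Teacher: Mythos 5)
Your proposal does not follow the paper's route, and as it stands it has a genuine gap: everything is made to rest on a forcing $\mathbb{P}\in W$ that is c.c.c., whose generic is a real whose adjunction preserves $CH$, and whose square ``decodes'' $\kappa$ many distinct reals --- but you never construct $\mathbb{P}$, and you yourself flag this as ``the principal obstacle.'' That object is not a routine $\diamond$/Suslin-tree adaptation; it is essentially the hard content of the Shelah--Woodin coding. Worse, the features you demand pull against each other. A Suslin-tree-like forcing is $\omega$-distributive: it adds no reals at all, so its generic is a branch, not a real, and there is then no real $a$ with $W[a]=W[G_{1}]$; any real in the joint extension that codes the decoded information is added only by the product, so you have no argument whatsoever that $W[a]\models CH$ for such a real. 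If, on the other hand, $\mathbb{P}$ does add a generating real (so that your nice-name count applies to $W[a]$), then the requirement that $\mathbb{P}\times\mathbb{P}$ produce $\kappa\geq\aleph_{3}$ many reals --- in effect, that the pair of generics makes $(2^{\aleph_{1}})^{W}$-many subsets of $\omega_{1}^{W}$ real-coded, e.g.\ by collapsing $\omega_{1}^{W}$ --- is exactly the asymmetric one-factor-vs-two-factors behaviour that the Shelah--Woodin construction is designed to achieve, and it cannot be had by fiat. Also note that the $\kappa$ reals must end up in $W[a,b]$, the model generated by the two \emph{reals}, not merely in $W[G_{1}\times G_{2}]$; your sketch never bridges that difference.

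For comparison, the paper does not attempt any such construction: it quotes [4, Theorem 1] as a black box to obtain a pair $(W,V)$ of generic extensions of $\mathbf{L}$ with $V=W[r]$, $W\models CH$, $V\models 2^{\aleph_{0}}=\lambda$, and then, after collapsing $\aleph_{1}$ over $V$, extracts two reals $a,b$, each $Add(\omega,1)$-generic over $V$ (hence over $W$, so $W[a]$ and $W[b]$ satisfy $CH$ by the usual counting), whose join codes $r$, so that $V\subseteq W[a,b]$ and $CH$ fails there with continuum at least $\lambda$; the only new work is the interleaving/genericity argument, carried out in detail in the proof of Theorem 4.3. If you want to avoid citing [4], you must either reprove that theorem or redesign the argument so that the many reals of $W[a,b]$ come from ground-model subsets of $\omega_{1}$ that become coded in the joint extension while each single real stays Cohen-like over $W$ --- which is again an interleaving-style coding argument, not the product-of-a-Suslin-tree picture you describe. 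As written, the proposal postpones precisely the step that constitutes the proof.
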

\begin{proof} Let $\lambda \geq \aleph_{5}^{\mathbf{L}}$ be regular in $\mathbf{L}$.
By [4, Theorem 1] there is a pair $(W,V)$ of generic extensions
of $\mathbf{L}$ such that:

$\hspace{1cm}\bullet$ $(W,V)$ satisfies $(1)_{\aleph_0}$.

$\hspace{1cm}\bullet$ $V \models  2^{\aleph_0}= \lambda.
$

Let $V=W[r]$ where $r$ is a real. Working in $V$, let
$\textrm{P}=Col(\aleph_0, \aleph_1)$ and let $G$ be
$\textrm{P}-$generic over $V$. In $V[G]$ the set $\{D \in V: D$ is
open dense in $Add(\omega, 1) \}$ is countable, hence we can
easily find two reals $a$ and $b$ in $V[G]$ such that both of $a$
and $b$ are $Add(\omega, 1)-$generic over $V$, and $r \in L[a,b].$
Then the model $W$ and the reals $a$ and $b$ are as
required.
\end{proof}

Note that for $\kappa > \aleph_0,$ by Theorem 3.3(4) we can not
expect to obtain [4, Theorems 1 and 2] without assuming the
existence of $0^{\sharp}.$ However it is natural to ask whether it
is possible to extend them under the assumption $``0^{\sharp}$
exists''. The following result (Cf. [1, Lemma 1.6]) shows that for
$\kappa > \aleph_0,$ there is no pair $(W,V)$ satisfying
$(1)_{\kappa}$ such that $0^{\sharp} \notin W \subseteq
\mathbf{L}[0^{\sharp}]$ and $W$ and $\mathbf{L}[0^{\sharp}]$ have the same cardinals
$\leq \kappa^{+}.$

\begin{theorem} Let $\kappa = \aleph_{1}^{V}.$ If $0^{\sharp}$
exists and $M$ is an inner model in which $\kappa_{\mathbf{L}}^{+}$ is
collapsed, then $0^{\sharp} \in M.$
\end{theorem}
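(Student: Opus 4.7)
The plan is to prove the contrapositive: assume $0^{\sharp} \notin M$ and derive a contradiction with the hypothesis that $\lambda := \kappa_L^+$ is collapsed in $M$. Since $0^{\sharp} \notin M$, Jensen's covering lemma for $L$ holds inside $M$: for every uncountable-in-$M$ set $X \subseteq \Ord$ there exists $Y \in L$ with $X \subseteq Y$ and $|Y|^M = |X|^M$. This is the principal tool, together with the fact that $\kappa = \aleph_1^V$ is a cardinal of $M$ (since $M \subseteq V$), and since $0^\sharp \in V$, $\kappa$ is in particular a Silver indiscernible, so $\lambda$ is strictly between $\kappa$ and the next indiscernible $\aleph_2^V$.

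Let $\mu := |\lambda|^M$; then $\kappa \leq \mu < \lambda$. I would first handle the case $\cf^M(\lambda) \geq \aleph_1^M$: pick an unbounded $X \subseteq \lambda$ in $M$ of $M$-size $\cf^M(\lambda)$ and cover it by $Y \in L$, $Y \subseteq \lambda$, with $|Y|^M = \cf^M(\lambda)$. Since $\lambda$ is regular in $L$ and $Y$ is unbounded in $\lambda$, $\otp(Y) = \lambda$ and $|Y|^L = \lambda$; the $L$-bijection between $\lambda$ and $Y$ lies in $M$, forcing $\mu = |\lambda|^M = |Y|^M = \cf^M(\lambda)$, so $\mu$ is regular in $M$. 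A short additional step using that every ordinal in the interval $(\kappa, \lambda)$ has $L$-cardinality $\kappa$ (so is $L$-collapsible to $\kappa$, and this collapse lives in $L \subseteq M$) pins $\mu$ down to $\kappa$ itself; hence $\cf^M(\lambda) = \kappa$, an uncountable regular cardinal in $M$.

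The remaining case $\cf^M(\lambda) = \omega$, together with the surviving sub-case $\cf^M(\lambda) = \kappa$ from the previous step, is where the hypothesis $\kappa = \aleph_1^V$ enters essentially. A cofinal sequence of type $\omega$ or $\kappa$ in $\lambda$ from $M$ lives in $V$, where $\kappa$ is a Silver indiscernible and $\lambda$ lies strictly below the next indiscernible. Using the fine structure of $L$ below $\lambda$ together with this indiscernibility pattern, one shows that such a collapse, combined with the $L$-structure available to $M$, already encodes $0^{\sharp}$, so $0^{\sharp} \in M$, contradicting our assumption. This last step is the main obstacle: standard Jensen covering is silent on countable sets, and one must follow the fine-structural argument of [1, Lemma 1.6], which exploits the special status of $\aleph_1^V$ as the first uncountable cardinal of $V$ in order to force the $M$-collapse to carry the information of $0^\sharp$.
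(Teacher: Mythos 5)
There is a genuine gap, and it sits exactly at the heart of the theorem. Your covering-lemma analysis inside $M$ (pinning down $|\lambda|^M=\kappa$ and $\cf^M(\lambda)$) is essentially correct but does no work: at the end you never use it to produce $0^{\sharp}$ inside $M$. The decisive step is dismissed with ``using the fine structure of $L$ below $\lambda$ together with this indiscernibility pattern, one shows that such a collapse \dots already encodes $0^{\sharp}$,'' followed by an appeal to [1, Lemma 1.6] --- but that is precisely the statement to be proved, so the proposal is circular where it matters. (A minor inaccuracy along the way: the next Silver indiscernible above $\aleph_1^V$ is not $\aleph_2^V$; the indiscernibles are unbounded in every uncountable cardinal of $V$. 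Nothing in your argument hinges on this, which is symptomatic of the fact that the indiscernibility of $\kappa$ is never actually exploited.)

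For comparison, the paper's proof is short, soft, and needs no fine structure and no covering inside $M$. The two ingredients are: (i) since $0^{\sharp}$ exists and $\kappa=\aleph_1^V$, the set $I\cap\kappa$ of Silver indiscernibles below $\kappa$ equals $\bigcap_{n<\omega}C_n$ for countably many \emph{constructible} clubs $C_n\subseteq\kappa$ (this is the content borrowed from [1]); and (ii) if $\kappa_{\mathbf{L}}^+$ is collapsed in $M$, then $M$ can enumerate the constructible clubs of $\kappa$ (there are only $\kappa^+_{\mathbf{L}}$ many of them, computed in $L$) in order type $\kappa$ and take a diagonal intersection, obtaining a single club $C\in M$ of $\kappa$ almost contained in every constructible club, hence almost contained in $\bigcap_n C_n=I\cap\kappa$. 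A club of $\kappa$ whose tail consists of indiscernibles lets $M$ define $0^{\sharp}$ (evaluate formulas in $L_\kappa$ at increasing tuples from a sufficiently late segment of $C$; indiscernibility makes this well defined). If you want to salvage your contrapositive route, you would still have to supply step (i) and the diagonal-intersection idea in (ii); the cofinality computation you carried out can simply be discarded.
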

\begin{proof} Let $I$ be the class of Silver indiscernibles. There
are constructible clubs $C_{n}, n < \omega,$ such that $I \cap
\kappa= \bigcap_{n< \omega}C_{n}.$ if $\kappa_{\mathbf{L}}^{+}$ is
collapsed in $M$, then in $M$ there is a club $C$ of $\kappa$,
almost contained in every constructible club. Hence $C$ is almost
contained in the $\bigcap_{n< \omega}C_{n}$ and hence in $I \cap
\kappa.$ It follows that $0^{\sharp}\in M.$
\end{proof}
We now prove a strengthening of Theorem 4.1 under stronger
hypotheses.

\begin{theorem} Suppose $cf(\lambda) > \aleph_0$, there are
$\lambda-$many measurable cardinals and $GCH$ holds. Then there is
a cardinal preserving generic extension $W$ of the universe and
two reals $a$ and $b$ such that:

$\hspace{1cm}(a)$ The models $W, W[a], W[b]$ and $W[a,b]$ have the
same cardinals.

$\hspace{1cm}(b)$ $W[a]$ and $W[b]$ satisfy $GCH$.

$\hspace{1cm}(c)$ $W[a,b] \models  2^{\aleph_0}= \lambda
.$
\end{theorem}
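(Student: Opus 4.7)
The plan is to combine the Shelah--Woodin construction of [4] with a simple ``Cohen real decomposition'' trick. I would take $W$ to be the universe itself (trivially a cardinal preserving generic extension of itself), so $W$ already satisfies $GCH$. Under our hypotheses---$GCH$ plus $\lambda$-many measurable cardinals with $cf(\lambda) > \aleph_{0}$---the Shelah--Woodin construction (as in the proof of [4, Theorem 2]) yields a cardinal preserving generic extension $W[r]$ of $W$ with $r$ a single real and $W[r] \models 2^{\aleph_{0}} = \lambda$.

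Working inside $W[r]$, I would let $a$ be Cohen-generic over $W[r]$ via $Add(\omega,1)$ and define $b = a \oplus r$ (bit-wise XOR). Because $a$ is Cohen-generic over $W[r] \supseteq W$, it is a fortiori Cohen-generic over $W$, so $W[a]$ is a one-step Cohen extension of $W$ and hence preserves both cardinals and $GCH$. The map $\sigma\colon p \mapsto p \oplus (r \restriction \dom(p))$ is an automorphism of $Add(\omega,1)$ lying in $W[r]$ with $\sigma(a) = b$, so $b$ is also Cohen-generic over $W[r]$ and therefore over $W$; thus $W[b]$ is likewise a one-step Cohen extension of $W$, preserving cardinals and $GCH$.

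For the joint extension, since $r = a \oplus b \in W[a,b]$ one has $W[r] \subseteq W[a,b]$, while clearly $W[a,b] \subseteq W[r][a]$, so $W[a,b] = W[r][a]$ is a one-step Cohen extension of $W[r]$. This preserves the cardinals of $W[r]$ (which coincide with those of $W$ by the previous step) and preserves $2^{\aleph_{0}} = \lambda$, since adding a single Cohen real does not change $2^{\aleph_{0}}$. Thus all four models $W, W[a], W[b], W[a,b]$ share the same cardinals, $W[a]$ and $W[b]$ both satisfy $GCH$, and $W[a,b] \models 2^{\aleph_{0}} = \lambda$.

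The principal obstacle is the first step: obtaining the Shelah--Woodin pair $(W, W[r])$ in a cardinal preserving fashion with $W[r] \models 2^{\aleph_{0}} = \lambda$. This is where the hypothesis of $\lambda$-many measurable cardinals with $cf(\lambda) > \aleph_{0}$ is used in an essential way; everything downstream is the elementary Cohen-automorphism argument sketched above, exploiting the presence of $r$ in $W[r]$ to twist a single Cohen generic $a$ into a second Cohen generic $b$ over $W$.
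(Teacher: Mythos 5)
Your second step (the XOR trick) is fine, but your first step contains a genuine error: you cannot take $W$ to be the universe itself. What [4] proves (Theorems 1, 2 and 4) is that there is a \emph{pair} $(W,V)$ of generic extensions of the ground model with $V=W[r]$ for a real $r$; the model $W$ over which the single real $r$ blows up the continuum is itself a carefully prepared nontrivial extension (roughly, it already contains the $\lambda$ many reals in scrambled form, and $r$ is the decoder), and this is precisely the hard point of the Shelah--Woodin construction. Nothing in [4] gives a real $r$ such that $V_0[r]$ is a cardinal preserving extension of the \emph{unprepared} ground model $V_0\models GCH$ with $2^{\aleph_0}=\lambda$ in $V_0[r]$; your appeal to ``[4, Theorem 2]'' for this stronger statement is a misreading, and the theorem you are asked to prove deliberately only asserts that $W$ is \emph{some} cardinal preserving generic extension of the universe. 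The repair is immediate: take the pair $(W,V)$ with $V=W[r]$ furnished by [4, Theorem 4] (satisfying $(1)_{\aleph_0}$, $(2)_{\aleph_0}$, $(3)_{\aleph_0}$ and $V\models 2^{\aleph_0}=\lambda$), exactly as in the paper, and run your argument with this $V$ in place of your ``$W[r]$ over the universe''.

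With that correction, your remaining argument is correct and in fact simpler than the paper's: take one Cohen real $a$ over $V$, set $b=a\oplus r$, note that $p\mapsto p\oplus(r\restriction \dom(p))$ is an automorphism of $Add(\omega,1)$ lying in $V$, so $b$ is Cohen over $V$ and hence over $W$; then $W[a]$ and $W[b]$ are single Cohen extensions of the GCH model $W$ (giving (a) and (b)), while $r=a\oplus b$ yields $V\subseteq W[a,b]=V[a]$, which has the same cardinals and still satisfies $2^{\aleph_0}=\lambda$ (here one uses $\lambda^{\aleph_0}=\lambda$ in $V$, which follows from $cf(\lambda)>\aleph_0$ and $\lambda=2^{\aleph_0}$). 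The paper instead first collapses $\aleph_1$ to get two mutually Cohen reals $a^*,b^*$ over $V$ and codes $r$ into the agreement pattern of $a^*$ and $b^*$ on the zero set of $a^*$, verifying genericity of $b$ by a density argument; your translation-automorphism coding needs only one Cohen real, no collapse, and no separate density check, so it is a legitimate streamlining of that part of the proof.
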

\begin{proof} By [4, Theorem 4] there is a pair $(W,V)$ of
cardinal preserving generic extensions of the universe such that:

$\hspace{1cm}\bullet$ $(W,V)$ satisfies $(1)_{\aleph_0},
(2)_{\aleph_0}$ and $(3)_{\aleph_0}$.

$\hspace{1cm}\bullet$ $V \models  2^{\aleph_0}= \lambda
.$

Working in $V$, let $\textrm{P}=Col(\aleph_0, \aleph_1)$ and let
$G$ be $\textrm{P}-$generic over $V$. As in the proof of Theorem
4.1 we can find two reals $a^*$ and $b^*$ such that $a^*$ is
$Add(\omega, 1)-$generic over $V$ and $b^*$ is $Add(\omega,
1)-$generic over $V[a^*],$ where $Add(\omega, 1)$ is the Cohen
forcing for adding a new real. Note that $V[a^*]$ and $V[a^*,
b^*]$ are cardinal preserving generic extensions of $V$. Working
in $V[a^*, b^*]$ let $\langle k_N: N<\omega \rangle$ be an
increasing enumeration of $\{N: a^{*}(N)=0\}$ and let $a=a^*$ and
$b=\{N: b^{*}(N)=a^{*}(N)=1 \} \cup \{k_N:r(N)=1 \}$ where $V=W[r]
.$ Then clearly $r \in \mathbf{L}[\langle k_N: N<\omega \rangle,
b]\subseteq \mathbf{L}[a,b]$ as $r=\{N: k_N \in b \}.$

We show that $b$ is $Add(\omega, 1)-$generic over $V$. It suffices
to prove the following:

$\hspace{1.4cm}$ For any $(p,q) \in Add(\omega,
1)*\lusim{A}dd(\omega, 1)$ and any open dense subset $D$

$(*)$$\hspace{1cm}$ $ \in V$ of $Add(\omega, 1)$ there is
$(\bar{p},\bar{q}) \leq (p,q)$ such that $(\bar{p},\bar{q}) \vdash
  `` \dot{b}$ extends

$\hspace{1.5cm}$ some element of $D $''.

Let $(p,q)$ and $D$ be as above. By extending one of $p$ or $q$ if
necessary, we can assume that $lh(p)=lh(q)$. Let $\langle k_N: N<M
\rangle$ be an increasing enumeration of $\{N<lh(p): p(N)=0\}.$
Let $s: lh(p) \rightarrow 2$ be such that considered as a subset
of $\omega,$

\begin{center}
$s=\{ N<lh(p): p(N)=q(N)=1 \} \cup \{k_N: N<M, r(N)=1 \}.$
\end{center}

Let $t \in D$ be such that $t \leq s.$

\begin{claim} There is an extension $(\bar{p},\bar{q})$ of $(p,q)$
such that $lh(\bar{p})=lh(\bar{q})=lh(t)$ and
\begin{center}
$t=\{ N<lh(t): \bar{p}(N)=\bar{q}(N)=1 \} \cup \{k_N: N<\bar{M},
r(N)=1 \},$
\end{center}
 where $\langle k_N: N<\bar{M}
\rangle$ is an increasing enumeration of $\{N<lh(\bar{p}):
\bar{p}(N)=0\}$.
\end{claim}
\begin{proof} Extend $p,q$ to $\bar{p}, \bar{q}$ of length $lh(t)$
so that for $i$ in the interval $[lh(s),lh(t))$
\begin{itemize}
\item $\bar{p}(i)=1$, \item $\bar{q}(i)=1$ iff $i \in t.$
\end{itemize}

Then
\begin{center}
$t=\{ N<lh(t): \bar{p}(N)=\bar{q}(N)=1 \} \cup \{k_N: N<M, r(N)=1
\}.$
\end{center}

But (using our definitions) $M =\bar{M}$ so

\begin{center}
$t=\{ N<lh(t): \bar{p}(N)=\bar{q}(N)=1 \} \cup \{k_N: N<\bar{M},
r(N)=1 \}.$
\end{center}

as desired.
\end{proof}
It follows that

\begin{center}
$(\bar{p},\bar{q}) \vdash  \dot{b}$ extends t
\end{center}
and $(*)$ follows.

It follows that $a$ and $b$ are $Add(\omega,1)-$generic over $W$
and $r \in \mathbf{L}[a,b].$ Hence the model $W$ and the reals $a$ and $b$
are as required and the theorem follows.
\end{proof}
\begin{remark} The above kind of argument is widely used in [2] to
prove the genericity of a $\l-$sequence of reals over $Add(\omega,
\l)$, the Cohen forcing for adding $\l-$many new reals.
\end{remark}
\section{Open problems}

We close this paper by some remarks and open problems.
Our first problem is related to Vanliere's Theorem.

\begin{problem} Find the least $\kappa$ such that there are $X
\subseteq \kappa$ and $a \subseteq \omega$ such that $\mathbf{L}[X] \models
  ZFC+GCH  , \mathbf{L}[X]$ and $\mathbf{L}[X,a]$ have the same
cardinals and $\mathbf{L}[X,a] \models  2^{\aleph_0} >
\aleph_1$
\end{problem}
Now consider the following property:

(*):$\hspace{1cm}$ If $\textrm{P}$ is a non-trivial forcing notion
and $G$ is $\textrm{P}-$generic over $V$,

$\hspace{1.5cm}$ then for any cardinal $\chi \geq \aleph_{2}^{V}$
and $x \in H^{V[G]}(\chi),$ there is $N \prec$

$\hspace{1.5cm}$ $ \langle H^{V[G]}(\chi), \in, <_{\chi}^{*},
H^{V}(\chi) \rangle$, such that $x \in N$ and $N \cap H^{V}(\chi)
\in$

$\hspace{1.5cm}$ $V,$ where $<_{\chi}^{*}$ is a well-ordering of
$H^{V}(\chi).$

Note that if $(*)$ holds, and $p \in G$ is such that $p$ forces
$``x \in N$ and $N \cap H^{V}(\chi) \in V$'' and decides a value for
$N \cap H^{V}(\chi)$, then $p$ is $(N \cap H^{V}(\chi),
\textrm{P})-$generic . Using Shelah's work on strong covering
property we can easily show that:

$\hspace{1cm}(a)$ If $0^{\sharp} \notin V,$ then $(*)$ holds.

$\hspace{1cm}(b)$ If in $V$ there is no inner model with a
measurable cardinal, then $(*)$ holds for

$\hspace{1.5cm}$any cardinal preserving forcing notion
$\textrm{P}.$

Now we state the following problem:

\begin{problem} Suppose $0^{\sharp} \in V.$ Does $(*)$ fail for
any non-trivial constructible forcing notion $\textrm{P}$ with
$o^{\mathbf{L}}(\textrm{P}) \geq \omega_{1}^{V},$ where $o^{\mathbf{L}}(\textrm{P})$
is the least $\beta$ such that forcing with $\textrm{P}$ over $\mathbf{L}$
adds a new subset to $\beta.$
\end{problem}
This problem is motivated by the fact that if $0^{\sharp} \in V,$
then for any non-trivial constructible forcing notion
$\textrm{P},$ forcing with $\textrm{P}$ over $V$ collapses
$o^{\mathbf{L}}(\textrm{P})$ into $\omega$ (Cf. [5]).

\begin{problem} Assume $V \models  GCH ,
\lambda$ is a cardinal in $V, A \subseteq \lambda$ and $V[A]$ is a
model of set theory with the same cardinals as $V$. Can we have
more than $\lambda-$many reals in $H^{V}(\lambda^{+})[A].$
\end{problem}
Let us note that if $\lambda$ is regular in $V$, then the answer
is no, and if $\lambda$ has countable cofinality in $V$, then the
answer is yes. Also if there is a stationary subset of
$[\lambda]^{\leq \aleph_0}$ in $V[A]$ of size $\leq \lambda,$ then
the answer is no (Cf. [3, Theorem VII.0.5(1)]). Let us note that the
Theorem as stated in [3] is wrong. The conclusion should be: There
are $\leq \lambda$ many reals in $H^{V}(\lambda^{+})[A])$.

Concerning Problem 5.3, the main question is for $\lambda=
\aleph_{\omega_1}.$ We restate it for this special case.

\begin{problem} (Mathias). Is Problem 5.3 true for $\lambda=
\aleph_{\omega_1}.$
\end{problem}

\end{document}